\date{\today}
\keywords{}
\author{Romain Dujardin}
\thanks{Research  partially supported by ANR project LAMBDA,  ANR-13-BS01-0002 and  a grant from the  Institut Universitaire de France}
\title{A closing lemma for polynomial automorphisms of $\mathbb{C}^2$}
\address{Sorbonne Universités, UPMC Univ Paris 06, Laboratoire de Probabilités et Modèles Aléatoires (LPMA, UMR 7599),   4 place Jussieu, 75252 Paris Cedex 05, France}
\email{romain.dujardin@upmc.fr}
\subjclass[2000]{37F45, 37F10, 37F15}
\newcommand{\cc}{\mathbb{C}}
\newcommand{\dd}{\mathbb{D}}
\newcommand{\zz}{\mathbb{Z}}
\newcommand{\e}{\varepsilon}
 \newcommand{\cv}{\rightarrow}
\newcommand{\fr}{\partial}
\newcommand{\om}{\Omega}
\newcommand{\set}[1]{\left\{#1\right\}}
\newcommand{\norm}[1]{\left\Vert#1\right\Vert}
\newcommand{\abs}[1]{\left\vert#1\right\vert}
\newcommand{\cd}{{\cc^2}}
\newcommand{\rest}[1]{ \arrowvert_{#1}}
\newcommand{\unsur}[1]{\frac{1}{#1}}
\newcommand{\la}{\lambda}
\newcommand{\hot}{ {h.o.t.}}
\newcommand{\loc}{\mathrm{loc}}
\newcommand{\inv}{^{-1}}
\DeclareMathOperator{\supp}{Supp}
\DeclareMathOperator{\jac}{Jac}
\newtheorem{prop}{Proposition} [section]
\newtheorem{thm}[prop] {Theorem}
\newtheorem{lem}[prop] {Lemma}
\newtheorem{cor}[prop]{Corollary}
\theoremstyle{remark}
\begin{document}

\begin{abstract}
We prove that for a polynomial diffeomorphism of $\cd$,
the support of any invariant measure, apart from  a few obvious cases,  is contained in the closure of the 
set of saddle periodic points. 
\end{abstract}

 \maketitle
 
 \section{Introduction and results}
 
 Let $f$ be a polynomial diffeomorphism of $\cd$  with non-trivial dynamics. This hypothesis  can
 be expressed in a variety of ways, for instance  it is equivalent to the positivity of  topological entropy. 
The dynamics of such transformations has attracted a lot of attention in the past few decades  (the reader can consult 
 e.g.  \cite{bedford} for   basic facts and references). 
 
  In this paper we make the standing assumption that $f$ is dissipative, i.e. 
 that the (constant) Jacobian of $f$ satisfies $\abs{\jac(f)}<1$.
 
 We classically denote by $J^+$   the forward   Julia set,
 which can be characterized as usual in terms of normal families, or by saying that  
 $J^+ = \fr K^+$, where $K^+$ is the set of points with bounded forward orbits. Reasoning analogously for backward iteration 
 gives the backward Julia set $J^- = \fr K^-$.  
 Thus the 2-sided Julia set is naturally defined by $J = J^+\cap J^-$
Another interesting dynamically defined subset  is    
the closure  $J^*$   of the set of saddle periodic points (which is also the support of the unique entropy maximizing 
 measure \cite{bls}). 
 
 The inclusion $J^*\subset J$ is obvious. 
 It is a major open question in this area of research    whether the converse inclusion holds.
Partial answers have been given in \cite{bs1, bs3, connex, lyubich peters 2, peters guerini}.
 
 \medskip

 Let  $\nu$ be an ergodic $f$-invariant   probability measure. If 
  $\nu$ is hyperbolic,  that is, its two    Lyapunov exponents\footnote{Recall that in holomorphic dynamics, 
  Lyapunov exponents always have even multiplicity.} are non-zero and of  opposite sign, 
 then the so-called Katok closing lemma \cite{katok} implies that $\supp(\nu) \subset J^*$. It may also be the case that $\nu$ is supported in the Fatou set: then from the classification of recurrent Fatou components in \cite{bs2}, this happens if and only if $\nu$ is supported on an attracting or  semi-Siegel periodic orbit, or is the Haar measure on
  a cycle  of $k$ circles along which $f^k$ is conjugate to an irrational  rotation (recall that 
 $f$ is assumed dissipative). Here by semi-Siegel periodic orbit, 
 we mean   a linearizable periodic orbit with one attracting   and one 
 irrationally indifferent multipliers. 
 
  The following ``ergodic closing lemma'' is the main result of this note:
 
 \begin{thm}\label{thm:main}
 Let $f$ be a dissipative polynomial diffeomorphism of $\cd$ with non-trivial dynamics, and $\nu$ be any invariant measure supported on $J$. Then $\supp(\nu)$ is contained in $J^*$. 
 \end{thm}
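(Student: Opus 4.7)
My plan is to reduce by ergodic decomposition to the case where $\nu$ is ergodic, which is legitimate because $J^*$ is closed. The two Lyapunov exponents $\lambda_1\ge\lambda_2$ then satisfy $\lambda_1+\lambda_2=\log|\jac(f)|<0$, leaving three possibilities. If $\lambda_1>0>\lambda_2$ the measure is hyperbolic and the Katok closing lemma recalled in the introduction immediately yields $\supp(\nu)\subset J^*$. If $\lambda_1<0$ then both exponents are negative; by Pesin theory a $\nu$-generic orbit converges to an attracting periodic cycle, which lies in the Fatou set and so contradicts $\supp(\nu)\subset J$. The entire content of the theorem is therefore the remaining semi-neutral case $\lambda_1=0$, $\lambda_2=\log|\jac(f)|<0$.

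In this case I would first apply Pesin theory on a positive-$\nu$-measure block $\Lambda$ to obtain strong stable manifolds $W^{ss}_{\loc}(x)$: holomorphic disks of uniform size varying continuously with $x\in\Lambda$, contained in $J^+$ by forward invariance and standard Fatou--Julia theory. The fundamental difficulty is that, $\lambda_1$ being zero, Pesin theory furnishes no unstable manifold at $x$, so there is no canonical second curve to intersect $W^{ss}_{\loc}(x)$ with.

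To remedy this, I would use that $\supp(\nu)\subset J^-$, so the backward Green current $T^-$ puts positive mass in every neighbourhood of any $x\in\supp(\nu)$. The laminar (or woven) structure of $T^-$ then produces a plentiful supply of local holomorphic disks contained in $J^-$ and passing arbitrarily close to $x$; these play the role of unstable manifolds. The decisive step is to arrange a \emph{transverse} intersection between such an unstable disk and some strong stable manifold $W^{ss}_{\loc}(y)$ for a Pesin point $y\in\Lambda$ close to $x$. Once such a transverse intersection is secured, a standard inclination-lemma/graph-transform argument produces a horseshoe, hence saddle periodic points, in an arbitrarily small neighbourhood of $x$, so $x\in J^*$.

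The main obstacle I foresee is precisely the transversality step in the semi-neutral case. Both $W^{ss}_{\loc}(y)$ and the unstable disks extracted from $T^-$ are one-dimensional holomorphic curves in a two-dimensional ambient space, so tangential contact is a priori possible, and the absence of an unstable Oseledec direction removes the naive way of excluding it. Resolving this presumably requires a delicate matching of Pesin scales on $\Lambda$ with the fine geometry of $T^-$, exploiting the strictly two-dimensional dissipative setting and the polynomial nature of $f$ in an essential way.
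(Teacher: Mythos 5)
Your reduction to the ergodic case and the trichotomy on exponents are fine (hyperbolic case by Katok, two negative exponents impossible on $J$), but the proposal has two genuine gaps. First, the zero-exponent case contains the \emph{atomic} measures: a Dirac mass at a semi-parabolic or semi-Cremer periodic point is an invariant measure supported on $J$, and your argument (Pesin blocks, $\nu$-generic orbits, disks of $T^-$ near a generic point) says nothing about it. This sub-case is a genuine part of the theorem: the paper treats semi-parabolic points by a local normal-form computation showing no Fatou disk can be transverse to $W^{ss}(p)$, and semi-Cremer points via the hedgehog theory of Lyubich--Radu--Tanase (local uniqueness of hedgehogs plus the fact that a hedgehog with interior in a center manifold forces the point to be semi-Siegel). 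None of this is reachable by the machinery you set up.

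Second, in the non-atomic semi-neutral case you have misidentified where the difficulty lies. Transversality is not the real obstacle: the Pesin stable manifold $W^s(x)$ is an entire curve contained in $K^+$, and by the laminar-current intersection theory of [BLS]/[DL] (Lemma \ref{lem:entire} in the paper) such a curve automatically has \emph{transverse} intersections with $W^u(p)$ for any saddle $p$, so there is no need for a ``delicate matching of Pesin scales with the geometry of $T^-$''. The step that actually fails in your sketch is ``transverse intersection $\Rightarrow$ inclination lemma $\Rightarrow$ horseshoe near $x$'': with top exponent zero there is no hyperbolicity of $\nu$ to run a Katok-type or graph-transform argument, and a transverse intersection point $t$ of $W^s(x)$ with $W^u(p)$ (or with a disk of $T^-$) is not a homoclinic point and produces no periodic orbits by any standard argument. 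What one must show is that $t$ itself lies in $J^*$; by [DL, Lemma 5.1] either $t$ is in the intrinsic boundary of $W^u(p)\cap K^+$ (then $t\in J^*$, and since $d(f^n(t),f^n(x))\to 0$ and $x$ is recurrent, $x\in J^*$), or $t$ lies in a Fatou disk $\Delta\subset W^u(p)$. Ruling out this second alternative is the heart of the proof: the paper iterates $\Delta$, uses continuity of stable manifolds on a Pesin block to see that a positive-measure set of points is attracted to the limit curve $\Gamma=\lim f^{n_j}(\Delta)$, so $\nu$ charges a subvariety, and then proves (Proposition \ref{prop:subvariety}, via the classification of Riemann surface automorphisms and a cone-field/dominated-splitting argument giving normal contraction) that a non-atomic ergodic measure charging a subvariety is Haar measure on a circle in a rotation domain, contradicting $\supp(\nu)\subset J$. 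Your proposal contains no substitute for this exclusion, so the central case of the theorem remains unproved.
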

 
 A consequence is that if $J\setminus J^*$ happens to be
  non-empty,  then the dynamics on $J\setminus J^*$ is ``transient" in a measure-theoretic sense. 
  Indeed, if $x\in J$, we can form an invariant 
 probability measure by taking a cluster limit  of $\unsur{n}\sum_{k=0}^n \delta_{f^k(x)}$ 
 and the theorem says that any such invariant measure will be concentrated on $J^*$. More generally the same argument implies:

\begin{cor}
Under the assumptions of the theorem, if $x\in J^+$, then $\omega(x)\cap J^*\neq \emptyset$. 
\end{cor}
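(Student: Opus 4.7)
The plan is to construct an $f$-invariant probability measure carried by $\omega(x)$ and to which Theorem~\ref{thm:main} can be applied, and then to read off the conclusion directly.

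Since $J^+\subset K^+$, the forward orbit of any $x\in J^+$ is relatively compact in $\cd$, so the sequence of empirical measures
\[
\mu_n := \frac{1}{n}\sum_{k=0}^{n-1}\delta_{f^k(x)}
\]
is tight. By Prokhorov's theorem I can extract a weak-$*$ subsequential limit $\nu$; passing to the limit in $f_*\mu_n-\mu_n = \frac{1}{n}(\delta_{f^n(x)}-\delta_x)\to 0$ produces an $f$-invariant probability measure. By construction $\supp(\nu)\subset\omega(x)$, and since $J^+$ is closed and forward invariant, $\omega(x)\subset J^+$, hence $\supp(\nu)\subset J^+$.

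To invoke Theorem~\ref{thm:main} I must also check $\supp(\nu)\subset J^-$. For this I use the dissipativity hypothesis. Since $\nu$ is also $f^{-1}$-invariant, Poincaré recurrence implies that $\nu$-a.e. point is recurrent under $f^{-1}$, in particular has a bounded backward orbit, so $\supp(\nu)\subset K^-$. To upgrade $K^-$ to $J^-$, I invoke the fact that on $\Int(K^-)$ the family $\{f^{-n}\}$ is normal and that $f^{-1}$ is uniformly volume-expanding ($|\jac f^{-1}|>1$): this precludes the existence of recurrent Fatou components of $f^{-1}$, so each component $\Omega$ of $\Int(K^-)$ is wandering under $f$. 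But a wandering $\Omega$ has infinitely many pairwise disjoint $f$-iterates in $\cd$, each of the same $\nu$-measure by invariance, forcing $\nu(\Omega)=0$. Hence $\nu(\Int(K^-))=0$ and $\supp(\nu)\subset J^-$.

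With $\supp(\nu)\subset J$ established, Theorem~\ref{thm:main} gives $\supp(\nu)\subset J^*$; since $\nu$ is a probability measure, $\supp(\nu)$ is a nonempty subset of $\omega(x)\cap J^*$, proving the corollary. The only non-routine step in this plan is the upgrade from $K^-$ to $J^-$: everything else is Krylov--Bogolyubov and Poincaré recurrence. I expect the main obstacle to be making the wandering/volume-expansion argument precise — in practice, one may prefer to cite the dissipative classification of Fatou components of $f^{-1}$ rather than redo it here.
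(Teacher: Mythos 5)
Your overall strategy is the paper's: take a weak-$*$ limit $\nu$ of the empirical measures along the orbit of $x$, observe $\supp(\nu)\subset\omega(x)$, check $\supp(\nu)\subset J$, and apply Theorem \ref{thm:main}. The paper compresses the middle step into the remark that $\omega(x)\subset J$ is obvious for $x\in J^+$; this rests on the standard fact that for a dissipative map $\Int(K^-)=\emptyset$, hence $J^-=K^-$. Two of your justifications of that middle step are shaky. First, a small one: Poincar\'e recurrence for $f^{-1}$ does \emph{not} give bounded backward orbits (a recurrent point may a priori make unbounded excursions between returns). You do not need it: $\supp(\nu)\subset\omega(x)$, and $\omega(x)$ is a compact set invariant under both $f$ and $f^{-1}$ (every backward iterate of a limit point is again a limit point of the bounded forward orbit), so every point of $\supp(\nu)$ has bounded backward orbit and $\supp(\nu)\subset K^-$ at once.

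The genuine gap is in the upgrade from $K^-$ to $J^-$. The inference ``no recurrent Fatou components of $f^{-1}$, hence every component of $\Int(K^-)$ is wandering'' is a non sequitur: a periodic component need not be recurrent, and your disjoint-iterates argument says nothing about periodic non-recurrent components (this is patchable: a periodic component of positive $\nu$-mass contains $\nu$-recurrent points, hence would be recurrent). More seriously, the claim that volume expansion of $f^{-1}$ by itself precludes recurrent components is asserted, not proved: a volume-expanding automorphism can perfectly well preserve an open set of infinite volume, so some boundedness input is required; and you cannot ``cite the dissipative classification of Fatou components of $f^{-1}$'', since $f^{-1}$ is volume-expanding, so \cite{bs2} does not apply to it. The clean way to close this is to prove (or quote) the standard fact directly: if $B\Subset\Int(K^-)$ is a ball, normality of $(f^{-n})_{n\geq 0}$ on a neighborhood of $\overline B$ gives a subsequence $f^{-n_j}$ converging uniformly on $\overline B$, so the sets $f^{-n_j}(B)$ lie in a fixed bounded set and have bounded volume, contradicting $\vol(f^{-n}(B))=\abs{\jac f}^{-2n}\vol(B)\to\infty$. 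Hence $\Int(K^-)=\emptyset$ and $J^-=K^-$, after which your $\supp(\nu)\subset K^-$ already finishes the proof and the whole wandering-component discussion becomes unnecessary.
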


Here as usual $\omega(x)$ denotes the $\omega$-limit set of $x$.  Note that for  $x\in J^+$ then it is obvious 
that $\omega(x)\subset J$. 
It would be interesting to know whether  the conclusion of the corollary can be replaced by the sharper one: $\omega(x)\subset J^*$. 
 
 \medskip
 
 Theorem \ref{thm:main} can be formulated slightly more precisely as follows.
 
 \begin{thm}\label{thm:precised}
 Let $f$ be a dissipative polynomial diffeomorphism of $\cd$ with non-trivial dynamics, and $\nu$ be any ergodic 
 invariant probability measure. Then one of the following situations holds:
 \begin{enumerate}[{(i)}]
  \item either $\nu$ is atomic and supported on an attracting or semi-Siegel cycle;
 \item or $\nu$ is the Haar measure on an invariant cycle of circles contained in a  periodic rotation domain;
 \item or $\supp(\nu)\subset J^*$.
 \end{enumerate}
 \end{thm}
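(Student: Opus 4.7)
My plan is to split according to the Lyapunov spectrum of $\nu$. By Oseledets, $\nu$-a.e.\ $x$ has two complex Lyapunov exponents $\chi_1(x) \geq \chi_2(x)$, which are constant $\nu$-almost surely by ergodicity, and dissipativity forces $\chi_1 + \chi_2 = \log|\jac(f)| < 0$, so in particular $\chi_2 < 0$. Three regimes then arise according to the sign of $\chi_1$: hyperbolic ($\chi_1 > 0$), doubly contracting ($\chi_1 < 0$), and neutral ($\chi_1 = 0$).

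If $\chi_1 > 0$ then $\nu$ is hyperbolic and the Katok closing lemma quoted in the introduction directly gives $\supp(\nu) \subset J^*$, i.e.\ conclusion (iii). If $\chi_1 < 0$, Pesin theory produces a full-dimensional local stable manifold through $\nu$-a.e.\ point, so $\supp(\nu) \subset \Int(K^+)$, which lies in the Fatou set; by ergodicity it is contained in a single periodic cycle of recurrent Fatou components. The Bedford--Smillie classification \cite{bs2} of recurrent Fatou components for dissipative polynomial automorphisms leaves only attracting basins, semi-Siegel basins, and rotation domains, and negativity of both exponents excludes the last two: $\nu$ is the uniform measure on an attracting cycle, so (i) holds.

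The main case is the neutral one, $\chi_1 = 0$ and $\chi_2 < 0$. If $\supp(\nu)$ meets the Fatou set, then by ergodicity it sits inside a single recurrent cycle of Fatou components and Bedford--Smillie applies as above; this time the vanishing of $\chi_1$ precludes an attracting basin, so we land in (i) (semi-Siegel cycle) or (ii) (invariant circle in a rotation domain). It therefore remains to prove that if $\supp(\nu) \subset J$ and $\chi_1 = 0$, then $\supp(\nu) \subset J^*$. Pesin theory still produces a holomorphic one-dimensional strong stable manifold $W^s_\loc(x)$ through $\nu$-a.e.\ $x$, contracting at rate $\chi_2 < 0$. I would attempt a partially hyperbolic closing argument: for a generic recurrent point $x$ and a large $n$ such that $f^n(x)$ is very close to $x$, examine the first-return map on a one-dimensional holomorphic transversal to $W^s_\loc(x)$; the strong-stable direction supplies genuine transverse contraction, and the aim is to force a fixed point of $f^n$ with a saddle multiplier structure inside an arbitrary neighborhood of $x$, which would place $x$ in $J^*$.

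The chief obstacle is precisely this zero-exponent closing step. Katok's original argument relies on uniform transverse expansion to set up a contraction fixed-point equation on a returning tubular neighborhood; here the central dynamics is only neutral and must instead be controlled by the holomorphicity of $f$, presumably through one-variable tools (Schwarz lemma, Denjoy--Wolff-type arguments) or the laminarity of the Pesin stable disks inside the Green current $T^+$. The delicate point is ruling out a Siegel-type scenario in which the neutral return is conjugate to an irrational rotation with no nearby periodic orbits; the expected outcome is that any such scenario would push $\supp(\nu)$ into the Fatou set, contradicting the standing assumption $\supp(\nu) \subset J$, but making this rigorous is the real content of the theorem.
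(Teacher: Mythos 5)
Your decomposition by Lyapunov spectrum is reasonable and the hyperbolic and doubly-contracting regimes are handled correctly (Katok, resp.\ Pesin plus the Bedford--Smillie classification), but the heart of the theorem is exactly the step you leave open: showing that $\supp(\nu)\subset J^*$ when $\supp(\nu)\subset J$ and the upper exponent vanishes. The Katok-style closing you sketch --- a first-return fixed-point argument on a transversal, with the neutral central direction controlled by Schwarz-type one-variable tools --- is not what the paper does and it is not clear it can be made to work: there is no mechanism to produce periodic orbits near, say, a semi-Cremer point by a contraction argument (indeed the atomic neutral case already requires Ueda's normal form for semi-parabolic points and the Lyubich--Radu--Tanase hedgehog theory for semi-Cremer points, neither of which appears in your sketch). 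So as written the proposal has a genuine gap, acknowledged in your last paragraph.

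The paper's route around the zero exponent is global rather than local, and never closes orbits in the neutral direction. For $\nu$ non-atomic with $\supp(\nu)\subset J$, one exponent is negative, so Pesin theory gives stable manifolds $W^s(x)\simeq\cc$ contained in $K^+$. By the laminar-current results of Bedford--Lyubich--Smillie (Lemma \ref{lem:entire}), such an entire curve must meet $W^u(p)$ transversally for any saddle $p$, at some point $t$; since $x$ is recurrent and $d(f^n(x),f^n(t))\to0$, it suffices to show $t\in J^*$. If not, Lemma \ref{lem:homoclinic} forces a Fatou disk $\Delta\subset W^u(p)$ through $t$; stable manifolds of a positive-measure set $A$ of Pesin points hit $\Delta$, so limits of $f^{n_j}(\Delta)$ produce a holomorphic curve $\Gamma$ with $\nu(\Gamma)\geq\nu(A)>0$. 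This is then excluded by Proposition \ref{prop:subvariety}: a non-atomic ergodic measure charging a subvariety must be Haar measure on a circle in a rotation domain (proved via the classification of automorphisms of the invariant Riemann surface and a cone-field/dominated-splitting argument showing normal contraction, hence Fatou), contradicting $\supp(\nu)\subset J$. These ingredients --- the transverse intersection of the Pesin stable curve with $W^u(p)$, the dichotomy ``intersection in $J^*$ or Fatou disk,'' and the no-charged-subvariety proposition --- are the missing ideas your proposal would need to fill its central gap.
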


 Note that the additional ergodicity  assumption on $\nu$ is harmless since any invariant measure is an integral of ergodic ones. 
 The only new ingredient with respect to Theorem \ref{thm:main}
  is the fact that measures supported on periodic orbits that do not fall in case {\em (i)}, that is, 
 are either semi-parabolic or semi-Cremer, are supported on $J^*$. For semi-parabolic points this is certainly known to the experts although  apparently not available in print.
 For semi-Cremer points this follows from the hedgehog construction of Firsova, Lyubich, Radu and Tanase 
  (see   \cite{lyubich radu tanase}). For completeness we give complete proofs below. 
 
 \medskip
 
 \noindent{\bf Acknowledgments.} Thanks to Sylvain Crovisier and Misha Lyubich for inspiring conversations.
  This work was motivated
  by the work of Crovisier and Pujals  on strongly 
 dissipative diffeomorphisms  (see \cite[Thm 4]{crovisier pujals}) and by the work of Firsova, Lyubich, Radu and Tanase 
 \cite{firsova lyubich radu tanase, lyubich radu tanase}
 on hedgehogs in higher dimensions (and the question whether  hedgehogs for   Hénon maps are contained in $J^*$).

 \section{Proofs}
 
In this section we prove Theorem \ref{thm:precised} by dealing separately with the atomic and the non-atomic case. Theorem \ref{thm:main} follows immediately. Recall that $f$ denotes a 
dissipative  polynomial diffeomorphism with non trivial dynamics and $\nu$   an $f$-invariant ergodic probability measure. 

  \subsection{Preliminaries}

Using the theory of laminar currents, it was shown in \cite{bls} that any saddle periodic point belongs to $J^*$. More generally, if $p$ and $q$ are saddle points, then 
$J^* = \overline{ {W^u(p)\cap W^u(q)}}$ (see Theorems 9.6 and 9.9 in \cite{bls}). 
This result was generalized in  \cite{tangencies} as follows.  If $p$ is any saddle point and $X\subset W^u(p)$, we respectively 
denote by $\mathrm{Int}_i X$, $\mathrm{cl}_i X$, $\fr_i X$ the interior, closure and boundary  of 
$X$ relative to the intrinsic topology of
$W^u(p)$, that is the topology induced by the biholomorphism $W^u(p)\simeq \cc$.   
 
 \begin{lem}[{\cite[Lemma 5.1]{tangencies}}]\label{lem:homoclinic}
 Let $p$ be a saddle periodic point.
 Relative to  the intrinsic topology in $W^u(p)$, $\fr_i(W^u(p)\cap K^+)$ is 
 contained in the closure of the set of transverse homoclinic intersections. In particular $\fr_i(W^u(p)\cap K^+)\subset J^*$. 
 \end{lem}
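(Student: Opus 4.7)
The plan is to use the linear action of $f$ on $W^u(p)$ in its intrinsic parametrization to rescale arbitrarily small disks around a boundary point $x$ to disks of controlled intrinsic size, extract a limit, and then invoke the laminar structure of the Green current $T^+$ to produce transverse intersections with $W^s(p)$ that pull back to transverse homoclinic points near $x$.

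Assume for simplicity that $p$ is fixed (else replace $f$ by a suitable iterate), and let $\phi:\cc\to W^u(p)$ be the entire parametrization conjugating $f\rest{W^u(p)}$ to $z\mapsto\la z$, with $\abs{\la}>1$. Write $x=\phi(z_0)\in\fr_i(W^u(p)\cap K^+)$ and set $\Delta_r=\phi(\set{\abs{z-z_0}<r})$ for small $r>0$. By the definition of the intrinsic boundary, $\Delta_r$ meets both $K^+$ and $\cd\setminus K^+$. The forward image $f^n(\Delta_r)$ is the intrinsic disk of radius $\abs{\la}^n r$ centered at $f^n(x)$; I would pick $n=n(r)$ so that $\abs{\la}^n r\in [1,\abs{\la}]$. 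Since $x\in K^+$, the orbit $(f^n(x))_{n\geq 0}$ stays in a fixed compact subset of $\cd$, so up to extracting a subsequence one has $f^{n_k}(x)\to y\in J$; and by a normal families argument applied to $\zeta\mapsto\phi(\la^{n_k}z_0+\zeta)$ on disks of uniformly bounded radius sending $0$ to a bounded point, up to a further subsequence $f^{n_k}(\Delta_r)$ converges to a holomorphic disk $D\subset\overline{W^u(p)}$ through $y$. By continuity $D$ still meets both $K^+$ and its complement, hence intersects $J^+$ non-trivially.

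The crux is then to show that $D$ admits transverse intersections with $W^s(p)$ arbitrarily close to $y$. This is where I would invoke the laminar structure of $T^+$ from \cite{bls}: the wedge $T^+\wedge[D]$ is well-defined with positive mass (as $D$ is non-trivial with respect to $T^+$), its support consists of transverse intersections of $D$ with the leaves of a lamination approximating stable manifolds of saddles, and density of $W^s(p)$ among those leaves, combined with a $\la$-lemma style persistence argument, yields actual transverse intersections of $D$ with $W^s(p)$ near $y$. By persistence of transversality under the convergence $f^{n_k}(\Delta_r)\to D$, the same is true with $D$ replaced by $f^{n_k}(\Delta_r)$ for $n_k$ large enough. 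Pulling such intersections back by $f^{-n_k}$, which contracts $W^u(p)$ intrinsically by $\abs{\la}^{-n_k}$ and preserves $W^s(p)$, produces transverse homoclinic points inside $\Delta_r$; letting $r\to 0$ shows that $x$ is in the closure of the set of transverse homoclinic intersections. The remaining inclusion in $J^*$ is the classical Bedford--Smillie fact recalled above.

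The main obstacle is the transversality step: extracting genuine transverse intersections of the limit disk $D$ with $W^s(p)$ itself, rather than merely with an approximating lamination, crucially requires the fine laminar (or woven) structure of $T^+$ together with the density of $W^s(p)$ among its leaves, both of which are non-trivial inputs from \cite{bls}.
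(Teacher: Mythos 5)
There is a genuine gap, and it sits exactly at the renormalization step. Boundedness of the single orbit $(f^{n_k}(x))$ does not give normality of the family $\zeta\mapsto\phi(\la^{n_k}z_0+\zeta)$ on a parameter disk of definite radius: normality requires local uniform boundedness of the maps, not boundedness at the center. In fact normality fails here for a structural reason: since $x\in\fr_i(W^u(p)\cap K^+)$, the disk $\Delta_r$ is not contained in $K^+$, so $c:=\sup_{\Delta_r}G^+>0$, and the functional equation $G^+\circ f = d\,G^+$ gives $\sup_{f^{n}(\Delta_r)}G^+ = d^{n}c\to\infty$; hence the renormalized maps are not locally bounded on the fixed parameter disk and no subsequence converges locally uniformly there (nor diverges uniformly, since the center stays bounded). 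Even if one extracts a limit on a smaller disk where the family happens to be bounded, the assertion that ``by continuity $D$ still meets both $K^+$ and its complement'' does not follow: the points of $f^{n_k}(\Delta_r)$ outside $K^+$ escape to infinity and are invisible in the limit, so the limit disk could perfectly well lie entirely in $K^+$ (even in the interior of $K^+$, or inside a stable manifold), in which case $G^+$ vanishes on $D$, $T^+\wedge[D]=0$, and the laminar-structure argument has nothing to act on. Note also that merely meeting $J^+=\supp T^+$ would not suffice to make $D$ charged by $T^+$, so the chargedness your transversality step requires is not established.

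The fix is to drop the renormalization altogether and run your final ingredient directly on $\Delta_r$, which is essentially the proof of \cite[Lemma 5.1]{tangencies} (the present paper only quotes that lemma, it does not reprove it). The subharmonic function $u=G^+\circ\phi$ is nonnegative, vanishes at $z_0=\phi^{-1}(x)$, and is positive somewhere in every disk $D(z_0,r)$ because $x$ is an intrinsic boundary point; by the minimum principle $u$ cannot be harmonic near $z_0$, so $z_0\in\supp(dd^c u)$, i.e. $T^+$ charges every $\Delta_r$. The input from \cite{bls} that you correctly identify as the crux (laminarity of $T^+$ subordinate to Pesin stable manifolds, density of $W^s(p)$ among the leaves via the inclination lemma) then applies verbatim to $\Delta_r$ and produces a transverse intersection of $\Delta_r\subset W^u(p)$ with $W^s(p)$, i.e. a transverse homoclinic point within intrinsic distance $r$ of $x$; letting $r\to 0$ finishes the argument. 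Your pull-back mechanism and the concluding remark that transverse homoclinic intersections lie in $J^*$ are fine, but as written the intermediate reduction to the limit disk $D$ is both unnecessary and unjustified.
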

 
 Here is another statement along the same lines, which can easily be extracted from \cite{bls}. 
 
 \begin{lem}\label{lem:entire}
 Let $\psi:\cc\to \cd$ be an entire curve such that $\psi(\cc)\subset K^+$. Then for any saddle point 
 $p$, $\psi(\cc)$ admits transverse intersections with $W^u(p)$.
 \end{lem}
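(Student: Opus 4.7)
The plan is to associate to $\psi$ an Ahlfors-type positive closed current on $\pd$, to identify it (up to mass on the line at infinity) with the Green current $T^+$, and to derive transverse intersections with $W^u(p)$ from the non-triviality of the slice $T^+\wedge[W^u(p)]$ via the geometric interpretation of wedge products of laminar currents from \cite{bls}.

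More precisely, for $r>0$ set $S_r := A(r)\inv\psi_*[\dd(0,r)]$, where $A(r)$ is the area (with multiplicity) of $\psi(\dd(0,r))$. Viewed on $\pd$, the $S_r$ are positive currents of mass $1$. By Ahlfors's lemma there is a sequence $r_n\cv\infty$ along which $\mathrm{mass}(\fr S_{r_n})\cv 0$, from which one extracts a weak limit $T$, a positive closed $(1,1)$-current of mass $1$ with $\supp T \subset \overline{\psi(\cc)} \subset K^+\cup L_\infty$, where $L_\infty$ denotes the line at infinity. The restriction $T\rest{\cd}$ is a positive closed $(1,1)$-current on $\cd$ supported in $K^+$; by the Fornaess--Sibony classification recalled in \cite{bls}, such a current must be a multiple of $T^+\rest{\cd}$. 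Together with the Siu decomposition along the irreducible curve $L_\infty$, this gives $T = aT^+ + c[L_\infty]$ with $a,c\geq 0$ and $a+c=1$. Since $\overline{K^+}\cap L_\infty$ reduces to a single point in standard filtration coordinates and no positive $(1,1)$-current is supported on a finite set, the currents $S_{r_n}$ must keep a definite fraction of their mass in a fixed compact of $\cd$, which forces $a>0$.

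Now let $\phi:\cc\cv W^u(p)$ be the intrinsic uniformization, so that $\phi^*T^+ = dd^c(G^+\rond\phi)$. This is a non-trivial positive measure on $\cc$: indeed $G^+\rond\phi$ is a non-constant subharmonic function, because $W^u(p)$ is an affine copy of $\cc$, hence not contained in the bounded set $K$, and therefore meets $\{G^+>0\}$. It follows that $T\wedge[W^u(p)] = a\,\phi_*\bigl(dd^c(G^+\rond\phi)\bigr)$ is a non-zero positive measure on $W^u(p)$. By the geometric interpretation of such wedge products in the laminar-current theory developed in \cite{bls}, this measure is the weak limit of the counting measures on the transverse intersections $\psi(\dd(0,r_n))\cap W^u(p)$; hence such transverse intersections must exist for $n$ large, proving the lemma.

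The main obstacle is this last step, identifying a pluripotential wedge product with an honest geometric intersection count. It is handled locally on small bidisks where $W^u(p)$ appears as a graph and $T^+$ admits a continuous plurisubharmonic potential, so that the slicing of $T$ by $[W^u(p)]$ reduces to a weighted count of transverse intersections, along the lines of \cite{bls}. A minor additional point is to rule out the degenerate alternative $\psi(\cc)\subset W^u(p)$: if it held, the non-trivial measure $\phi^*T^+$ would pull back to a non-trivial measure on $\cc$ via $\psi$, contradicting $G^+\rond\psi\equiv 0$.
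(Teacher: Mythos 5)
Your opening moves are fine and are in the spirit of the argument the paper refers to (the first half of the proof of \cite[Lemma 5.4]{tangencies} also starts from the Ahlfors current of $\psi$, identifies it with $T^+$ using the uniqueness of positive closed currents supported on $K^+$, and exploits the non-vanishing of $dd^c(G^+\circ\phi)$ on $W^u(p)\simeq\cc$). Two small repairs there: $[W^u(p)]$ is not a closed current on $\cd$ (an unstable manifold is not a closed subset, its closure being typically all of $J^-$), so the ``wedge'' must be taken with pieces of $W^u(p)$, i.e.\ you should really only work with $\phi^*T^+=dd^c(G^+\circ\phi)$; and ``non-constant subharmonic'' by itself does not force a non-zero Laplacian --- you need that $G^+\circ\phi\geq 0$ is non-constant \emph{and} attains its minimum $0$ at $p$, hence cannot be harmonic. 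Also, your way of excluding $\psi(\cc)\subset W^u(p)$ is not an argument (measures do not pull back like that, and nothing there contradicts $G^+\circ\psi\equiv 0$); the correct reason is that $\psi$ would then lift through $\phi$ to a non-constant entire map with image in $\{G^+\circ\phi=0\}$, which by Picard would force $G^+\circ\phi\equiv 0$, i.e.\ $W^u(p)\subset K^+$, impossible.

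The genuine gap is exactly the step you label ``the main obstacle''. Weak convergence $S_{r_n}\to T$ does \emph{not} imply convergence of the slices $S_{r_n}\wedge[\Delta]\to T\wedge[\Delta]$ for a fixed disk $\Delta\subset W^u(p)$: wedge products are not continuous under weak convergence, and continuity of the potential of the \emph{limit} current only makes $T\wedge[\Delta]$ well defined --- it says nothing about the intersections of the approximating curves with $\Delta$. What would be needed is $L^1_{\rm loc}$-convergence on $\Delta$ of local potentials of $S_{r_n}$ restricted to $\Delta$, and Hartogs' lemma gives only the upper bound; a priori all the mass of $T\wedge[\Delta]$ could be ``created in the limit'' while $\psi(\dd(0,r_n))$ never meets $\Delta$. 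The results of \cite{bls} you invoke do not fill this hole: the geometric interpretation there concerns the wedge of two (uniformly) laminar currents, expressed through intersections of their own constituent disks, not a statement that $T^+\wedge[\Delta]$ is the limit of counting measures on $\psi(\dd(0,r_n))\cap W^u(p)$; and even granting existence of intersection points, your claim that they are \emph{transverse} is nowhere justified (isolated intersections of holomorphic curves may all be tangential). Closing the argument requires real additional input, e.g.\ laminarity/strong approximability of the Ahlfors current, geometric intersection of its limit disks with $W^u(p)$, a Hurwitz-type persistence argument to transfer an isolated intersection from a limit disk back to $\psi(\dd(0,r_n))$, and a separate argument (for instance via the family of disks, or via Lemma \ref{lem:homoclinic}) to produce a transverse intersection --- none of which is an off-the-shelf citation. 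As written, the decisive step is asserted rather than proved.
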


\begin{proof}
This is identical to  the first half of the  proof  of \cite[Lemma 5.4]{tangencies}.
\end{proof}

We will repeatedly use the following alternative which follows from the combination of the two previous lemmas.  Recall that 
a Fatou disk is a holomorphic disk along which the iterates $(f^n)_{n\geq 0}$ form a normal family. 

\begin{lem}\label{lem:inters}
Let $\mathcal{E}$ be an entire curve contained in $K^+$, $p$ be any saddle point, and $t$ be a transverse intersection point between 
$\mathcal{E}$ and $W^u(p)$. Then either $t\in J^*$ or there is a Fatou disk $\Delta\subset W^u(p)$ containing $t$.  
\end{lem}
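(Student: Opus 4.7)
The plan is a simple dichotomy on the location of $t$ inside $W^u(p) \cap K^+$. Since $t$ belongs to $\mathcal{E} \subset K^+$ and to $W^u(p)$, either $t$ lies on the intrinsic boundary $\fr_i(W^u(p) \cap K^+)$, or it lies in the intrinsic interior. In the first case, Lemma \ref{lem:homoclinic} applied to the saddle point $p$ immediately yields $t \in J^*$, which is the first alternative of the conclusion. In the second case, there is an intrinsic open neighborhood $\Delta$ of $t$ in $W^u(p)$ contained in $K^+$, which after shrinking we may assume to be a conformal disk, and it remains to show that $\Delta$ is a Fatou disk.

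The one step requiring thought---though in the end it is brief---is the verification that $(f^n|_\Delta)_{n \geq 0}$ forms a normal family. The key observation is the automatic inclusion $W^u(p) \subset K^-$: every $z \in W^u(p)$ has $f^{-n}(z) \to p$, so its backward orbit is bounded and $z \in K^-$. Combined with $\Delta \subset K^+$, this gives $\Delta \subset K^+ \cap K^- = K$, the compact and $f$-invariant filled Julia set. Hence $f^n(\Delta) \subset K$ for all $n \geq 0$, the iterates form a uniformly bounded family of holomorphic maps $\Delta \to \cd$, and Montel's theorem yields normality. I do not anticipate any serious obstacle: the transversality of the intersection at $t$ plays no direct role in this particular argument (it is the hypothesis $t \in \mathcal{E} \cap W^u(p)$ with $\mathcal{E} \subset K^+$ that powers the dichotomy), and the whole proof is essentially a combination of Lemma \ref{lem:homoclinic} with the compactness of $K$.
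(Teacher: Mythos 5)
Your proof is correct and follows essentially the same route as the paper: the intrinsic boundary/interior dichotomy, Lemma \ref{lem:homoclinic} in the boundary case, and in the interior case a disk $\Delta\subset W^u(p)\cap K^+$ whose forward iterates stay bounded, hence normal. Your extra justification of boundedness via $W^u(p)\subset K^-$ and compactness of $K=K^+\cap K^-$ is a valid (and slightly more explicit) version of the paper's one-line remark.
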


\begin{proof}
Indeed, either $t\in \fr_i(W^u(p)\cap K^+)$ so by Lemma \ref{lem:homoclinic}, $t\in J^*$, 
or $t\in \mathrm{Int}_i(W^u(p)\cap K^+)$. In the latter case,
 pick any open disk $\Delta\subset\mathrm{Int}_i(W^u(p)\cap K^+) $ containing $t$. Since $\Delta$ is contained in  
$K^+$, its forward iterates remain bounded so it is a Fatou disk. 
\end{proof}

\subsection{The atomic case}
Here we prove Theorem \ref{thm:precised} when $\nu$ is   atomic. By ergodicity, this implies that $\nu$ is concentrated on a single
periodic orbit.
  Replacing $f$ by an iterate we may assume that it is concentrated on a fixed point. Since $f$ is dissipative there must be an attracting eigenvalue. A first possibility is that this fixed point is attracting or semi-Siegel. Then we are 
in case {\em (i)} and there is nothing to say. Otherwise 
$p$ is semi-parabolic or semi-Cremer and we must show that $p\in J^*$. In both cases, $p$ admits a strong stable manifold 
$W^{ss}(p)$ associated to the contracting eigenvalue, which is biholomorphic to $\cc$  by a theorem of Poincaré. Let $q$ be a 
saddle periodic
point and $t$ be a point of   transverse intersection   between $W^{ss}(p)$ and $W^u(q)$. If $t\in J^*$, then 
since $f^n(t)$  converges to $p
$ as $n\to \infty$ we are done. Otherwise there is a non-trivial Fatou disk $\Delta$ transverse to  $W^{ss}(p)$ at $t$. 
Let us   show  that this is contradictory. 

 In the semi-parabolic case, this is classical. A short argument goes as follows  (compare \cite[Prop. 7.2]{ueda}). Replace $f$ by an iterate so that the neutral eigenvalue is equal to 1.
  Since $f$ has no curve of fixed points  there are local coordinates $(x,y)$
near $p$ in which $p=(0,0)$, $W^{ss}_{\rm loc}(p)$ is the $y$-axis $\set{x=0}$
 and $f$ takes the form 
$$(x,y) \longmapsto   (x+ x^{k+1}+  \hot , by+  \hot)\ , $$ with $\abs{b}<1$
(see \cite[\S 6]{ueda}). Then $f^n $ is of the form 
 $$(x,y) \longmapsto   (x+ n x^{k+1}+  \hot , b^n y +  \hot  ) \ , $$
so we see that $f^n$ cannot be normal along any disk transverse to the $y$ axis and we are done. 

 In the semi-Cremer case we rely on the hedgehog theory of \cite{firsova lyubich radu tanase, 
 lyubich radu tanase}.  
 Let $\phi:\dd\to \Delta$ be any parameterization, and fix local coordinates $(x,y)$ as before in which
 $p=(0,0)$, $W^{ss}_{\rm loc}(p)$ is the $y$-axis  
 and $f$ takes the form $$(x,y) \longmapsto   (e^{i2\pi\theta} x  , by) +  \hot$$ 
 Let $B$ be a small neighborhood of the origin in which the hedgehog is well-defined. 
 Reducing $\Delta$ and iterating a few times if necessary, we can 
 assume that for all $k\geq 0$, $f^k(\Delta)\subset B$ and $\phi$ is of the form $s\mapsto   (s, \phi_2(s))$. 
Then the first coordinate of 
$f^n\circ \phi $ is of the form $s\mapsto e^{i2n\pi\theta} s+  \hot$. 
If  $(n_j)_{j\geq 0}$ is a 
subsequence  such that $f^{n_j}\circ \phi$ converges to some $\psi = (\psi_1, \psi_2)$, we get that
 $\psi_1(s)   =  \alpha s+ h.o.t.$, 
where $\abs{\alpha} = 1$. Thus $\psi(\dd)  = \lim f^{n_j}(\Delta)$ is a non-trivial holomorphic 
disk $\Gamma$ through 0 that is smooth at the origin. 
  
%
 For 
every $k\in \zz$ we have that $f^{k}(\Gamma) = \lim f^{n_j+k}(\Delta)\subset B$. 
Therefore by the local uniqueness of hedgehogs (see \cite[Thm 2.2]{lyubich radu tanase})
$\Gamma$ is contained in $\mathcal{H}$. It follows that  $\mathcal{H}$ has non-empty relative interior in any local center 
manifold of $p$ and 
 from \cite[Cor. D.1]{lyubich radu tanase} we infer  that $p$ is semi-Siegel, which is the desired contradiction. 
 
 \subsection{The non-atomic case}
 Assume now that $\nu$ is non-atomic. If $\nu$ gives positive mass to the Fatou set, then by  
   ergodicity it must give full mass to a cycle of  recurrent Fatou components.  These were classified in 
 \cite[\S 5]{bs2}: they are  either attracting basins or rotation domains.    
 Since $\nu$ is non-atomic we must be in the second situation. Replacing $f$ by $f^k$ we may assume that we are
  in a fixed 
 Fatou component $\om$. Then $\om$ retracts onto some Riemann surface $S$ which is a biholomorphic 
 to a disk or an annulus  and  on which the 
 dynamics is that of an   irrational rotation. Furthermore all orbits in $\om$ converge to $S$. Thus $\nu$ must give full mass to $S$, and since $S$ is foliated by invariant circles, by ergodicity $\nu$   gives full mass to a single circle. Finally the 
 unique ergodicity of irrational rotations implies that $\nu$ is the Haar measure. 
 
 \medskip
 
 Therefore we are left with the case where $\supp(\nu)\subset J$, that is, we must prove Theorem \ref{thm:main}. 
 Let us  start by recalling some   facts on the  Oseledets-Pesin theory of our mappings. 
 Since $\nu$ is ergodic by the Oseledets theorem there exists 
 $1\leq k\leq 2$,   a set $\mathcal R$ of full measure and for $x \in \mathcal R$ a 
 measurable splitting of $T_x\cd$, $T_x\cd = \bigoplus_{i=1}^k E_i(x)$ such that 
 for $v\in E_i(x)$, $\lim_{n\cv\infty}\unsur{n}\log  \norm{df^n_x(v)}   = \chi_i$.    
 Moreover, $\sum \chi_i   = \log \abs{\jac(f)}<0$, 
 and since $\nu$ is non-atomic both $\chi_i$ cannot be both negative (this is already part of Pesin's theory, see \cite[Prop. 2.3]{bls}). 
  Thus $k=2$  and  the exponents satisfy   $\chi_1<0$ and $\chi_2\geq 0$ (up to relabelling). 
Without  loss of generality, we may further assume that points in $\mathcal R$ satisfy 
   the conclusion of the Birkhoff ergodic theorem for $\nu$.

  As observed in the introduction,
  the ergodic closing lemma is well-known when $\chi_2 >0$ so
   we might only consider the case $\chi_2=0$ (our proof actually  treats both cases simultaneously). 
    
 To ease notation, let us denote by $E^s(x)$ the stable Oseledets subspace and by $\chi^s$ the corresponding Lyapunov exponent
 ($\chi^s<0$). The Pesin stable manifold theorem (see e.g. \cite{fathi herman yoccoz} for  details)
 asserts that 
  there exists a measurable 
 set $\mathcal{R}'\subset \mathcal R$ of full measure,  
 and a family  of holomorphic disks
 $W^s_\loc (x)$,
  tangent to $E^s(x)$ at $x$ for  $x\in \mathcal R'$, and such that 
  $f(W^s_\loc (x))\subset W^s_\loc (f(x))$. In addition for every $\e>0$ there exists a set 
  $\mathcal{R}'_\e$  of measure  $\nu(\mathcal{R}'_\e) \geq 1-\e$ and constants 
  $r_\e$ and $C_\e$ 
  such that for 
  $x\in \mathcal{R}'_\e$, $W^s_\loc (x)$ contains a graph of slope at most 1 over a ball of radius $r_\e$ in $E^s(x)$ 
  and for $y\in W^s_\loc (x)$, $d(f^n(y), f^n(x))\leq C_\e\exp ( (\chi^s+\e)n)$ for every $n\geq 0$. Furthermore, local stable 
  manifolds vary continuously on $\mathcal{R}'_\e$. 
 
 From this we can form global stable manifolds by declaring\footnote{If $\nu$ has a zero exponent, this may not be the stable manifold of $x$ in the usual sense, that is, there might exists points outside $W^s(s)$ whose orbit approach that of $x$.} that $W^s(x)$ is the increasing union 
 of $f^{-n} (W^s_\loc(f^n(x)))$. Then  it is a well-known fact that $W^s(x)$ is a.s. biholomorphically equivalent to $\cc$
 (see e.g. \cite[Prop 2.6]{bls}). Indeed, 
 almost every point visits    $\mathcal{R}'_\e$ infinitely many times, and from this we 
 can view $W^s(x)$ as an increasing union of disks $D_j$ such that the modulus of the annuli $D_{j+1}\setminus D_j$ is 
 uniformly bounded from below.  Discarding a set of zero measure if necessary, it is no loss of generality to 
  assume that  $\bigcup_{\e>0} \mathcal R'_\e = \mathcal R'$ and that for every 
 $x\in \mathcal{R}'$, $W^s(x)\simeq \cc$. 
  
  \medskip
  
 To prove the theorem we show that for  every $\e>0$, $\mathcal R'_\e \subset J^*$. 
 Fix  $x\in \mathcal R'_\e$ and a  saddle point $p$.  By Lemma \ref{lem:entire}  
 there is a transverse intersection $t$ between $W^s(x)$ 
 and $W^u(p)$. Since $x$ is recurrent and $d(f^n(x), f^n(t))\to 0$, 
 to prove that $x\in J^*$ it is enough to show that $t\in J^*$. We argue by contradiction so assume that this is not the case. Then by Lemma \ref{lem:inters} there is a Fatou disk $\Delta$ through 
 $t$ inside $W^u(p)$. Reducing $\Delta$ a little if necessary we may assume that $f^n$ is a normal family in some
  neighborhood of $\overline \Delta$ in $W^u (p)$. 
  
   Since $\nu$ is non-atomic and stable manifolds vary continuously for the $C^1$ topology on 
 $\mathcal R'_\e$, there is a set $A$ of positive measure such that if $y\in A$, $W^s(y)$ 
 admits a transverse intersection with $\Delta$. The iterates $f^n(\Delta)$ form a normal family and $f^n(\Delta)$ is 
 exponentially close to $f^n(A)$. Let $(n_j)$ be some subsequence such that $f^{n_j}\rest\Delta$ converges. Then the
  limit map has either generic rank 0 or 1, that is if $\phi : \dd\to \Delta$ is a parameterization, $f^{n_j}\circ \phi$ converges uniformly on $ \dd$ to some limit map $\psi$, which is either constant or has generic rank 1. 
  Set $\Gamma = \psi(\dd)$. Let $\nu'$ be a cluster 
  value of the sequence of measures $(f^{n_j})_*(\nu\rest{A})$. Then $\nu'$ is a measure of mass 
  $\nu(A)$, supported on $\overline \Gamma$ and $\nu'\leq \nu$.  Since $\nu$ gives no mass to points, the rank 0 case is excluded so $\Gamma$ is a (possibly singular) curve.  Notice also that if $z$ is an interior point of $\Delta$ 
  (i.e. $z= \phi(\zeta)$ for some $\zeta \in \dd$), then $\lim f^{n_j}(z)   = \psi(\zeta)$ is an interior point of $\Gamma$. This shows that $\nu'$ gives full mass to $\Gamma$ (i.e. it is not concentrated on its boundary).  Then 
  the proof of Theorem \ref{thm:main} 
  is concluded by the following result of independent interest. 
 
  \begin{prop}\label{prop:subvariety}
  Let $f$ be a dissipative polynomial diffeomorphism of $\cd$ with non-trivial dynamics, and $\nu$ be an ergodic non-atomic
   invariant measure, giving positive measure to a subvariety. Then  
   $\nu$ is the Haar measure on an invariant cycle of circles contained in a  periodic rotation domain.
   
    In particular  a non-atomic invariant measure supported on $J$ gives no mass to subvarieties.
  \end{prop}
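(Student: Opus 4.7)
The plan is to argue in four steps: first reduce to the case of an $f$-invariant irreducible analytic curve $V$; then show that the tangential Lyapunov exponent along $V$ vanishes; then deduce, via a normally attracting argument, that $V$ sits inside a rotation domain; and finally identify $\nu$ using the ``non-atomic Fatou case'' argument already spelled out in the excerpt.

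For the reduction, non-atomicity of $\nu$ forces any irreducible component of $V$ of positive $\nu$-mass to have complex dimension $1$; pick such a component $V_0$. The iterates $V_n := f^n(V_0)$ are again irreducible analytic curves with $\nu(V_n) = \nu(V_0) > 0$, and two distinct ones intersect in a $0$-dimensional, hence $\nu$-null, set. Were the $V_n$ all distinct, summing $\nu(V_n)$ would violate $\nu(\cd) = 1$; therefore $f^N(V_0) = V_0$ for some $N \geq 1$, and replacing $f$ by $f^N$ (which is harmless for the statement) I may assume $V := V_0$ is irreducible with $f(V) = V$. The restriction $f|_V$ is then a holomorphic automorphism of the Riemann surface $V$ (or of its normalization) preserving the ergodic non-atomic probability $\nu|_V$. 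A case analysis on the uniformization of $V$ (one of $\pu$, $\cc$, $\cc^*$, or a hyperbolic quotient) shows that only rotation-like automorphisms preserve a non-atomic probability measure; moreover, finite-order rotations admit only atomic ergodic invariant measures, so $f|_V$ must be an \emph{irrational} rotation. In particular the tangential Lyapunov exponent $\chi^t$ vanishes, and from $\chi^t + \chi^\perp = \log|\jac f|$ one gets $\chi^\perp = \log|\jac f| < 0$.

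The crux is then to show that $V$ sits in the Fatou set. Set $C := \supp(\nu|_V)$; by the previous step this is a topological circle on which $f$ acts as an irrational rotation. In an $f|_V$-invariant Hermitian metric on $V$ near $C$ one has $|df|_V| \equiv 1$ on $C$, and combining this with $\det df = \jac f$ yields a uniform transverse contraction rate $|\jac f| < 1$. A standard normally attracting argument should then supply an open tubular neighborhood $U \supset C$ with $f(\overline{U}) \subset U$ and $f^n \to C$ uniformly on $U$, placing $C$ inside a recurrent Fatou component $\Omega$. The Bedford--Smillie classification \cite{bs2} leaves only attracting basins and rotation domains; the attracting case is incompatible with $\nu$ non-atomic, so $\Omega$ is a rotation domain. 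The identification of $\nu$ as the Haar measure on an invariant cycle of circles then follows verbatim from the ``non-atomic case'' paragraph of the excerpt (retraction onto the invariant $1$-dimensional Riemann surface $S$, ergodicity selecting a single invariant circle, unique ergodicity of irrational rotation pinning down the measure).

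I expect the main obstacle to be the third step: upgrading the measure-theoretic stable disks of Pesin theory to a genuine $f$-invariant open tube around $C$. This really uses the \emph{global} rotational structure of $f|_V$ (and hence the uniform transverse contraction rate along the compact set $C$) rather than merely the ergodic statement $\chi^t = 0$. Some additional care is needed if $V$ is singular, in which case Step~2 should first be carried out on the normalization of $V$ before pushing the conclusion back down.
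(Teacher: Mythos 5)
Your overall route is the same as the paper's: produce an invariant Riemann surface carrying $\nu$, recognize $f$ on it as an irrational rotation with $\nu$ the Haar measure on an invariant circle, show the circle is normally attracting (hence in the Fatou set), and finish with the Bedford--Smillie classification of recurrent Fatou components. Two points, however, fall short of a proof. First, your reduction to a genuinely invariant curve ($f^N(V_0)=V_0$) rests on the claim that distinct iterates of the curve meet in a zero-dimensional, $\nu$-null set. That is fine for a closed analytic subvariety of $\cd$, but Proposition \ref{prop:subvariety} is applied in the paper to a local piece of curve (the limit disk $\psi(\dd)$ in the proof of the main theorem), and for such a germ two distinct iterates can overlap along a relatively open set without coinciding. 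The paper's proof is built for exactly this situation: pigeonhole plus non-atomicity give a relatively open overlap between two iterates of $\Gamma_0$, and one then takes $\Gamma=\bigcup_{k\in\zz}f^k(\Gamma_0)$, an invariant injectively immersed Riemann surface, with no claim that any iterate maps $\Gamma_0$ to itself. Your subsequent analysis only uses the invariant surface, so this is repairable, but as written the reduction does not cover the generality in which the statement is needed. (Minor related point: after replacing $f$ by $f^N$ you must pass to an ergodic component of $\nu$ for $f^N$, as ergodicity can be lost.)

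Second, and more seriously, the step you yourself flag as the main obstacle --- upgrading the pointwise transverse contraction to an $f$-invariant attracting tube around the circle --- is precisely where the paper does real work, and it is left unproved in your proposal. The paper's Lemma \ref{lem:dominated} establishes a dominated splitting along a relatively compact invariant annulus by a cone-field computation: in a frame adapted to $T\Gamma$, $df$ is triangular with unimodular diagonal entry (because $f\rest{\Gamma}$ is an isometry of the Poincaré metric, comparable to the ambient metric on the annulus) and normal entry of modulus $\abs{\jac(f)}<1$, so the cone criterion applies; combined with $C\inv\le\norm{df^n\rest{T\Gamma}}\le C$ this yields uniform normal contraction and hence convergence of all nearby orbits to $\Gamma$. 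Your heuristic ($\abs{df\rest{V}}\equiv 1$ on $C$ together with $\det df=\jac(f)$) controls only the induced action on the quotient bundle $T\cd/TC$; passing from that to an open set $U\supset C$ with $f(\overline U)\subset U$ and $f^n\to C$ requires the domination/normal-hyperbolicity argument (or the explicit cone estimate) to be carried out, and "standard Pesin theory" will not substitute for it, since Pesin stable disks are only measurable in the base point. So the proposal identifies the right mechanism but asserts, rather than proves, the key lemma.
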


 \begin{proof}
 Let $f$ and $\nu$ be as in the statement of the proposition, and $\Gamma_0$ be a subvariety such that $\nu(\Gamma_0)>0$.
 Since $\nu$ gives no mass to the singular points of $\Gamma_0$, by reducing $\Gamma_0$ a bit we may assume that $\Gamma_0$ is smooth.  If  $M$ is  an integer such that $1/M < \nu(\Gamma_0)$, by the pigeonhole principle
  there exists $0\leq k \leq l \leq M$ such that 
$\nu(f^k(\Gamma_0)\cap f^l(\Gamma_0))>0$, so $f^k(\Gamma_0)$ and $f^l(\Gamma_0)$ 
intersect along a relatively open set. Thus replacing 
$f$ by some iterate $f^N$ (which does not change the Julia set) 
we can assume that $\Gamma_0\cap f(\Gamma_0)$ is 
relatively open in $\Gamma_0$ and $f(\Gamma_0)$.  
Let now $\Gamma = \bigcup_{k\in \zz} f^k(\Gamma_0)$. This is an invariant, 
injectively 
immersed Riemann surface with $\nu(\Gamma)>0$. Notice that replacing $f$ by $f^N$ may  
 corrupt  the ergodicity 
of $\nu$ so if needed we replace $\nu$ by a component of its ergodic decomposition (under $f^N$)
giving positive (hence full) mass to $\Gamma$. 
 
 \medskip
 
 We claim that $\Gamma$ is biholomorphic to   a  domain of the form $\set{z\in \cc, \ r <\abs{z} <R}$ 
 for some $0\leq r< R \leq \infty$,  that 
 $f\rest{\Gamma_0}$ is conjugate to an irrational rotation, and $\nu$ is the Haar measure on an invariant circle. This is 
 {\em a priori} not enough to 
 conclude the proof since at this stage nothing prevents such an invariant ``annulus'' to be contained in $J$. 
 
 To prove the claim, note first that since $\Gamma$ is non-compact, it is either biholomorphic 
   to $\cc$  or   $\cc^*$, or  it is a hyperbolic Riemann surface\footnote{In the situation of Theorem \ref{thm:main} we further know that $ \Gamma\subset K$ so the first two cases are excluded.}. In addition  $\Gamma$ possesses an automorphism $f$
  with a non-atomic ergodic  invariant measure. In the case of $\cc$ and $\cc^*$ all automorphisms are affine and 
  the only possibility is that $f$ is an irrational rotation. In the case of a hyperbolic Riemann surface, the list of possible dynamical 
  systems is also well-known (see e.g. \cite[Thm 5.2]{milnor}) and again the only possibility is that $f$ is conjugate to 
  an irrational rotation on a disk or an annulus. The fact that $\nu$ is a Haar measure follows as before. 
 
 \medskip
 
Let $\gamma $ be the circle supporting $\nu$, and    
 $\widetilde \Gamma \subset \Gamma$ be a relatively compact invariant 
annulus containing $\Gamma$ in its interior. 
 To conclude the proof we must show that $\gamma$ is contained in the Fatou set. 
 This will result  from the following lemma, which will be proven afterwards.
 
 \begin{lem}\label{lem:dominated}
$f$ admits a dominated splitting along $\widetilde \Gamma$. 
 \end{lem}

See \cite{sambarino} for generalities on the notion of dominated splitting.
In our setting, since $\Gamma$ is an invariant complex submanifold and  $f$ is dissipative, 
the dominated splitting actually implies  a normal hyperbolicity property.
Indeed, observe first that 
 $f\rest{\tilde \Gamma}$ is an isometry for the Poincaré metric  $\mathrm{Poin}_\Gamma$ of $\Gamma$, 
 which is equivalent to the induced Riemannian metric on $\widetilde \Gamma$. In particular  
  $C\inv \leq \norm{df^n\rest{T\tilde\Gamma}}\leq C$ for some $C>0$ independent of $n$.  
 Therefore a dominated splitting for 
 $f\rest{\widetilde \Gamma}$  means that there is a continuous splitting of
  $T\cd$ along $\widetilde \Gamma$, $T_x\cd = T_x\Gamma\oplus V_x$, and for every 
  $x\in \widetilde \Gamma$ and   $n\geq 0$ we have $\norm{df_x^n\rest{V_x}} \leq C'\lambda^n$ for some $C'>0$ and 
   $\lambda <1$.    
In other words,  $f$ is normally contracting along $\widetilde\Gamma$.  
 Thus in a neighborhood of $\gamma$, 
all orbits  converge to $\Gamma$. This completes the proof of   Proposition \ref{prop:subvariety}.
 \end{proof}

 \begin{proof}[Proof of Lemma \ref{lem:dominated}] 
 By the cone criterion for dominated splitting (see \cite[Thm 1.2]{newhouse cone} or \cite[Prop. 3.2] {sambarino}) it is enough to 
 prove that for every $x\in \Gamma$ there exists a cone $\mathcal{C}_x$ about $T_x \Gamma$ 
 in $T_x\cd$ such that  the field of 
 cones   $(\mathcal{C}_x)_{x\in \widetilde \Gamma}$
  is strictly contracted by the dynamics. For   $x\in   \Gamma$, choose a vector $e_x\in T_x\Gamma$ of unit norm relative to the Poincaré metric 
 $\mathrm{Poin}_\Gamma$
  and  pick $f_x$ orthogonal to $e_x$ in $T_x\cd$ and such  that $\det(e_x, f_x)=1$. 
  Since $\mathrm{Poin}_\Gamma\rest{\widetilde\Gamma}$ is equivalent to the metric  induced by the ambient Riemannian metric,
  there exists a constant $C$ such that for all $x\in \widetilde\Gamma$, 
  $C\inv\leq \norm{e_x}\leq C$. Thus, 
   the basis $(e_x, f_x)$ differs from an orthonormal basis by bounded multiplicative constants, i.e. 
   there exists $C\inv\leq \alpha(x)\leq C$ such that $(\alpha(x)e_x, \alpha\inv(x)f_x)$ is orthonormal. 
 
Let us work  in the frame $\set{(e_x, f_x), x\in   \Gamma}$. Since $df\rest{\Gamma}$ is an isometry for the Poincaré metric    
and $f(\Gamma) = \Gamma$, 
the matrix expression of $df_x$ in this frame is of the form 
$$\begin{pmatrix} 
e^{i\theta(x)} & a(x) \\ 0 & e^{-i\theta(x)} J
\end{pmatrix},$$
where $J$ is the (constant) Jacobian. Fix $\lambda$ such that 
$\abs{J}<\lambda < 1$, and for $\e>0$, let $\mathcal{C}_x^\e\subset T_x\cd$ be the cone defined by 
$$\mathcal{C}_x^\e = \set{ue_x+vf_x, \ \abs{v} \leq  \e \abs{u}}.$$ 
Let also $A = \sup_{x\in \widetilde \Gamma}\abs{a(x)}$. 
Working in coordinates, if  $(u,v)\in  \mathcal{C}_x^\e$ then 
$$df_x(u,v) = :(u_1, v_1) = (e^{i\theta(x)} u + a(x) v , e^{-i\theta(x)}Jv),$$
hence $$ \abs{u_1} \geq \abs{u} - A \abs{v} \geq \abs{u} (1-A\e)
\text{ and } \abs{v_1} = \abs{Jv} \leq \e \abs{J} \abs{u} $$
We see that if $\e$ is so small that $\abs{J} < \lambda(1-A\e)$, then  for every $x\in \widetilde \Gamma$ we have that 
$\abs{v_1}\leq \lambda\e\abs{u_1}$, that is, 
$df_x(\mathcal C _x^\e)\subset \mathcal{C}_{f(x)}^{\la\e}$. The proof is complete.   
 \end{proof}

 \end{document}